\documentclass[11pt]{article}
\date{}

\usepackage[title]{appendix}
\usepackage{xcolor}
\usepackage[margin=1in]{geometry}                
\usepackage{graphicx}
\usepackage{subcaption}
\usepackage{pdflscape}
\usepackage{amssymb}
\usepackage[normalem]{ulem}
\usepackage{hyperref}
\usepackage{enumitem}
\usepackage{mathrsfs}
\usepackage{epstopdf}
\usepackage{rotating}
\usepackage{longtable} 
\usepackage{adjustbox}
\usepackage{float}

\usepackage{color}
\usepackage{bbm, dsfont}
\usepackage{pst-node}
\usepackage{tikz-cd}
\usepackage{amsfonts} 
\usepackage{geometry}
\usepackage{amsthm}
\usepackage{amsmath}
\usepackage{titlesec}
\usepackage{amssymb}
\usepackage{enumitem}
\usepackage{float}
\usepackage [english]{babel}
\usepackage [autostyle, english = american]{csquotes}
\usepackage{algorithm}
\usepackage[noend]{algpseudocode} 
\makeatletter
\def\BState{\State\hskip-\ALG@thistlm}
\makeatother
\usepackage{hyperref}
\usepackage[normalem]{ulem}
\usepackage{mathrsfs}
\usepackage[italicdiff]{physics}

\newlist{casess}{enumerate}{1}
\setlist[casess]{label=     \textbf{Case} \arabic*:}
\usepackage{mathtools}

\makeatletter
\newcommand*{\rom}[1]{\expandafter\@slowromancap\romannumeral #1@}
\makeatother

\usepackage{etoolbox}

\makeatletter
\patchcmd{\ttlh@hang}{\parindent\z@}{\parindent\z@\leavevmode}{}{}
\patchcmd{\ttlh@hang}{\noindent}{}{}{}
\makeatother

\usepackage{listings}
\usepackage{color} 
\definecolor{mygreen}{RGB}{28,172,0} 
\definecolor{mylilas}{RGB}{170,55,241}

\newlist{Assumptions}{enumerate}{1}
\setlist[Assumptions]{label=     \textbf{Assumption} \arabic*:}

\makeatletter

\newsavebox{\@brx}
\newcommand{\llangle}[1][]{\savebox{\@brx}{\(\m@th{#1\langle}\)}%
  \mathopen{\copy\@brx\kern-0.5\wd\@brx\usebox{\@brx}}}
\newcommand{\rrangle}[1][]{\savebox{\@brx}{\(\m@th{#1\rangle}\)}%
  \mathclose{\copy\@brx\kern-0.5\wd\@brx\usebox{\@brx}}}
\makeatother

\usepackage{lipsum} 
\usepackage{titlesec}
\titleformat{\subsection}[runin]
       {\normalfont\bfseries}
       {\thesubsection}
       {0.5em}
       {}
       [.]

 \newtheorem{thm}{Theorem}[section]
 \newtheorem{cor}[thm]{Corollary}
 
 \newtheorem{lem}[thm]{Lemma}
 
 \theoremstyle{definition}
 
 \theoremstyle{remark}

 \newtheorem{ex}[thm]{Example}
 \newtheorem{que}[thm]{Question}
 \numberwithin{equation}{section}

\numberwithin{equation}{section}

\def\N{\mathbb{N}}
\def\T{\mathbb{T}}
\def\Z{\mathbb{Z}}

\def\Z{\mathbb Z}

\newcommand{\U}{\mathcal{U}}

\DeclarePairedDelimiterX{\inp}[2]{\langle}{\rangle}{#1, #2}

\makeatletter
\newcommand*\bigcdot{\mathpalette\bigcdot@{.5}}
\newcommand*\bigcdot@[2]{\mathbin{\vcenter{\hbox{\scalebox{#2}{$\m@th#1\bullet$}}}}}
\makeatother

\def\CC{\mathbb C}

\def\<{\langle}
\def\>{\rangle}
\providecommand{\norm}[1]{\lVert#1\rVert}

\numberwithin{equation}{section}

\usepackage[backend=biber,maxnames=10]{biblatex}
\begin{document}

\title{The convolution product is generally not an open map}

\author{Felipe I. Flores
\footnote{
\textbf{2020 Mathematics Subject Classification:} Primary 46A30, Secondary 43A20, 46H05.
\newline
\textbf{Key Words:} Open multiplication, open bilinear map, convolution algebra, Wiener algebra.}
}

\maketitle

\begin{abstract}\setlength{\parindent}{0pt}\setlength{\parskip}{1ex}\noindent
We show that the convolution product $\ell^1(G)\times \ell^1(G)\to \ell^1(G)$ is not an open map if $G$ is either non-abelian or contains an element of infinite order. The case $G=\Z$ solves a problem posed by Kania and Ma\'slany.
\end{abstract}

\section{Introduction}

Let $A$ be a complex Banach algebra. Since the multiplication map $A\times A\to A$ is bilinear, it does not need to satisfy the open mapping theorem. In fact, the literature contains many recorded examples of Banach algebras with non-open multiplication maps. This motivated the authors in \cite{DrKa18} to study the class of Banach algebras whose multiplication map is (uniformly) open. Such a property has interesting connections with the theory of differential subalgebras \cite{KaMa24} and with the property of stable rank one \cite[Proposition 4.1]{DrKa18}.

Some examples of Banach algebras with an open multiplication map include algebras of functions with bounded variation \cite{KoTu19,CaKaSh21} and other Banach function algebras \cite{BaMaWa13}. Interestingly, in the case of $C(X)$, the openness of the multiplication map is completely characterized in terms of the covering dimension of $X$ \cite{Ko06}, and it holds precisely when ${\rm dim}(X)< 1$.

On the other hand, the literature also contains many examples of algebras whose multiplication is non-open. Among these, we can mention matrix algebras \cite{Be17}, convolution algebras associated with (non-group) cancellative monoids, and algebras of bounded operators over certain classes of Banach spaces \cite{DrKa18}.

For us in this article, the main object of study will be the \emph{convolution algebra} associated with a group $G$. Recall that, given a group $G$, its convolution algebra is the Banach space $\ell^1(G)$, equipped with the \emph{convolution product}:
$$
\Big(\sum_{g\in G}a_g\delta_g\Big)\Big(\sum_{g\in G}b_h\delta_h\Big)=\sum_{k\in G}\Big(\sum_{gh=k}a_gb_h\Big)\delta_k.
$$
This is a Banach algebra under the usual norm $\|\sum_{g\in G}a_g\delta_g\|_1=\sum_{g\in G}|a_g|$. 

The openness of the convolution product has been studied before. Indeed, in \cite{DrKa18}, the authors studied the case of convolution algebras over abelian groups and managed to show that the convolution map is not open if the covering dimension of the dual group is at least two, leaving the case of $\ell^1(\Z)$ open. Furthermore, the openness of multiplication in $\ell^1(\Z)$ was formally stated as an open problem in \cite{KaMa24}. Kania and Ma\'slany also went beyond the case of $G=\Z$ and asked the following question.

\begin{que}[{Kania-Ma\'slany \cite{KaMa24}}]\label{theque}
    Is there an infinite group $G$ for which $\ell^1(G)$ has open multiplication map?
\end{que}

Our main result here shows that, under very mild assumptions on the group, the answer to this question is negative. To be more precise, we prove the following.

\begin{thm}\label{main}
    Let $G$ be a group and suppose that one of the following is true: \begin{enumerate}
        \item $G$ is non-abelian,
        \item $G$ contains an element of infinite order.
    \end{enumerate}
    Then $\ell^1(G)$ does not have an open multiplication map.
\end{thm}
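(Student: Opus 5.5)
The plan is to split along the two hypotheses of Theorem~\ref{main}, and in each case to produce explicit $a,b\in\ell^1(G)$ and elements $c_n\to ab$ that admit no factorization $c_n=a_nb_n$ with $a_n\to a$, $b_n\to b$. Such a sequence shows that the multiplication map is not open at the point $(a,b)$. In both cases I would first reduce to a small subgroup $H\le G$: the cyclic subgroup $\langle g\rangle\cong\Z$ in case 2, and a subgroup generated by two non-commuting elements in case 1. The tool for the reduction is the contractive projection $E_H\colon\ell^1(G)\to\ell^1(H)$ (restriction of coefficients), together with the decomposition $\ell^1(G)=\bigoplus_{x\in H\backslash G}\ell^1(Hx)$ of $\ell^1(G)$ as a left $\ell^1(H)$-module.

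The reduction should go as follows. Take $a,b\in\ell^1(H)$ and a candidate factorization $c=a'b'$ with $c\in\ell^1(H)$. Write $a'=\sum_x a'_x$ over right cosets and $b'=\sum_y b'_y$ over left cosets. Then $E_H(a'b')=\sum_x a'_x b'_{x^{-1}}$. Since $a'\approx a$ and $b'\approx b$, all the off-diagonal pieces $a'_x$ with $x\notin H$ have small total norm, and likewise for $b'$. Hence $c$ is a product of elements close to $a$ and $b$, up to an error that is quadratically small in the perturbation. So I aim to choose the obstruction in $\ell^1(H)$ to be robust under errors of order $\varepsilon^2$ when the perturbation has size $\varepsilon$. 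For a bilinear map this is the natural scale, so it seems plausible.

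For $H=\Z$ I would pass to the Wiener algebra via the Gelfand transform and exploit zeros on $\T$. I take $a=1-z$ and $b=1-z$, so that $ab=(1-z)^2$ has a double zero at $1$. The target is a perturbation $c_\varepsilon$ with $\|c_\varepsilon-(1-z)^2\|_1\to 0$ such that any factorization $c_\varepsilon=a'b'$ with $a'\approx 1-z$ forces $\|a'-(1-z)\|_1$ to stay bounded below. The idea is that $c_\varepsilon$ is small in $\ell^1$-norm near $(1-z)^2$, yet has a zero configuration near $z=1$ (for instance, zeros at $1\pm\sqrt\varepsilon$ on the real axis, or a pair splitting transversally across $\T$) that cannot be split between two factors each $\ell^1$-close to $1-z$. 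The winding numbers of the factors in $\{0,1\}$ and the $\ell^1$-control of the derivatives near $1$ should rule out such a split. \textbf{This is the main obstacle}: sup-norm and winding arguments alone seem insufficient, since the winding numbers can always be distributed consistently. The first thing I would try is to estimate $\|a'-(1-z)\|_1$ from below through $|a'(z_0)|/|z_0-1|$ at a zero $z_0$ of $c_\varepsilon$ chosen so that $|z_0-1|\gg\varepsilon$.

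For case 1, I take $g,h$ with $gh\neq hg$. I would look for a non-openness point modelled on the $2\times 2$ matrix algebra, where multiplication is known to be non-open \cite{Be17}, for example $a=\delta_e-\delta_g$ and $b=\delta_e-\delta_h$ or similar elements in $\ell^1(\langle g,h\rangle)$. The mechanism I expect is the following: $ab$ lies in a ``degenerate'' position, and a perturbation $c$ of it in the direction $\delta_{gh}-\delta_{hg}$ cannot be achieved at first order. Indeed, first-order variations of the product are $a\,\beta+\alpha\, b$, and I would check that the commutator direction lies outside the closure of this set. If $g$ has infinite order, case 2 applies anyway, so I may assume $g$ and $h$ are torsion. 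Then the idempotents $p_g=\frac1n\sum_k\delta_{g^k}$ and $p_h$ give an honest noncommutative corner, on which the matrix argument should apply directly.
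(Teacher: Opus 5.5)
Both halves of your proposal have genuine gaps, so neither case is actually proved. In the infinite-order case you flag the decisive step as unresolved yourself, and the remedies you suggest are not available in $\ell^1(\Z)$. Elements of the Wiener algebra are defined only on $\T$, so zeros at $1\pm\sqrt{\varepsilon}$ and values $a'(z_0)$ off the circle make no sense. Also, $\ell^1$-closeness gives no control of derivatives: $\eta z^M$ has norm $\eta$ but derivative of size $\eta M$. You give no reason why $(1-z,1-z)$ should be a point of non-openness at all.

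What is missing is the right base point together with a device that creates a second variable for the winding argument. The paper takes nonzero $f,g\in A(\T)$ with $fg=0$ and supposes that $B(f,r)B(g,s)$ contains a ball about $0$, where $r<\|f\|_\infty$ and $s<\|g\|_\infty$. This yields factorizations $F_nG_n=c\,\delta_{nx}$ of targets of fixed size and growing frequency. The ultrapower homomorphism $Q\colon\ell^1(G)^{\mathcal U}\to\ell^1(G^{\mathcal U})$ turns these into $UV=c\,\delta_\nu$, with $\nu=[nx]_{n\to\mathcal U}$ rationally independent of $x$. After truncation and a homomorphism $K\to\Z^2$, one gets nonvanishing Laurent polynomials $P,R$ on $\T^2$ with $P\approx f(z^q)$, $R\approx g(z^q)$ and $PR\approx cw^m$, $m\neq0$. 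At a point $z_f$ where $|f(z_f^q)|>r+\eta$, the loop $P(z_f,\cdot)$ has winding number $0$ in $w$. By connectedness the same holds for every $P(z,\cdot)$, and likewise for $R$. This contradicts $PR$ being homotopic through nonvanishing loops to $cw^m$. Your coset reduction with $O(\varepsilon^2)$ errors is also unnecessary: in case 2 you may assume $G$ abelian, since otherwise case 1 applies.

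In the non-abelian case, a first-order computation cannot show non-openness of a bilinear map, because the quadratic term $\alpha\beta$ also moves the product. At $(0,0)$ the derivative vanishes, yet multiplication in any unital Banach algebra is open there. Your fallback via $p_g=\frac1n\sum_k\delta_{g^k}$ fails as well. In $Q_8$, or any Hamiltonian group, every cyclic subgroup is normal, so $p_gp_h=p_{\langle g\rangle\langle h\rangle}=p_hp_g$ for all $g,h$, and no noncommutative corner arises. Besides, you never explain how non-openness in a corner would transfer to $\ell^1(G)$.

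The missing ingredient is direct finiteness. Suppose multiplication in a directly finite unital Banach algebra were open at $(a,b)$ with $ab=0$. Then one can write $c_j1=a_jb_j$ with $a_j\to a$, $b_j\to b$ and scalars $0\neq c_j\to0$. Direct finiteness gives $b_ja_j=c_j1$, hence $ba=0$. Since $\ell^1(G)\subseteq\operatorname{VN}(G)$ is directly finite, it suffices to produce $a,c$ with $ac=0\neq ca$. The paper fixes $x,h$ with $xh\neq hx$ and smooth $p,q$ with disjoint supports, $p(w)=q(z)=1$ for suitable points $z\neq w$ of $\T$. It sets $a=\sum_n\widehat p(n)\delta_{x^n}$ and $c=\big(\sum_m\widehat q(m)\delta_{x^m}\big)\delta_h$, so $ac=0$ because $pq=0$. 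It detects $ca\neq0$ through a norm-one functional $\Phi$ supported on the double coset $\langle x\rangle h\langle x\rangle$, which gives $\Phi(ca)=q(z)p(w)=1$.
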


Of course, our main theorem reduces Question \ref{theque} to the case of infinite torsion abelian groups. Their convolution algebras could still have an open or even uniformly open multiplication map. However, we will not speculate about the validity of such a statement.

Before we move into proving Theorem \ref{main}, let us briefly mention that the proof is divided into two steps. In the abelian case, we establish a homotopical obstruction inside the convolution algebra of the ultrapower group $G^\U$. In the non-abelian case, we use the fact that $\ell^1(G)$ is directly finite and contains elements $a,b$ satisfying $ab=0$ and $ba\not=0$. In both cases, we establish concrete examples of elements $(a,b)\in\ell^1(G)\times \ell^1(G)$ that witness the non-openness of the multiplication map.

\section{Proof of the main result}
\subsection{The case of abelian groups}

For a discrete abelian group $G$ and an element $x\in G$ of infinite order, identify the Wiener algebra
$$
 A(\T)=\left\{f(z)=\sum_{k\in\Z}a_kz^k:\|f\|_A=\sum_k|a_k|<\infty\right\}\subset C(\T),
$$
with the closed subalgebra $\ell^1(\langle x\rangle)\subseteq\ell^1(G)$ through the isometric algebra embedding
$$
 f(z)=\sum_k a_kz^k\mapsto\sum_k a_k\delta_{kx}.
$$
Note that we are using additive notation for the group operation in $G$.

\begin{thm}\label{main2}
Let $G$ be a discrete abelian group containing an element $x$ of infinite order. Convolution on $\ell^1(G)$ is not an open map. More precisely, it fails to be open at every pair of nonzero elements $a,b\in\ell^1(\langle x\rangle)$ with $ab=0$.
\end{thm}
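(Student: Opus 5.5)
The plan is to argue by contradiction, turning a quantitative failure of openness into a topological obstruction in the Gelfand spectrum of an ultrapower algebra.

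Fix nonzero $a,b\in\ell^1(\langle x\rangle)\cong A(\T)$ with $ab=0$, and view their Gelfand transforms $\hat a,\hat b$ as continuous functions on $\T$. Since $\hat a\hat b\equiv 0$ and both are nonzero, there are disjoint nonempty open arcs $I,J\subseteq\T$ and $m>0$ with $|\hat a|\ge m$ on $I$ (hence $\hat b=0$ there) and $|\hat b|\ge m$ on $J$ (hence $\hat a=0$ there). Suppose the multiplication were open at $(a,b)$. Then for $\varepsilon<m/4$ there is $\eta>0$ such that every $c$ with $\|c\|_1<\eta$ factors as $c=a'b'$ with $\|a'-a\|_1,\|b'-b\|_1<\varepsilon$. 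I would apply this to $c_n=\tfrac{\eta}{2}\delta_{nx}$ for every $n\in\N$. This gives $a_n',b_n'\in\ell^1(G)$ with $a_n'b_n'=\tfrac{\eta}{2}\delta_{nx}$.

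Next I would pass to the ultrapower $G^\U$ along a free ultrafilter on $\N$. Consider the norm-bounded sequences $(a_n')$, $(b_n')$. I will assume that the coordinatewise ultralimit gives a contractive homomorphism from the ultraproduct of copies of $\ell^1(G)$ onto a quotient identified with $\ell^1(G^\U)$; checking this precisely is one point to watch. Granting it, we obtain $A',B'\in\ell^1(G^\U)$ with $A'B'=\tfrac{\eta}{2}\delta_y$, where $y=(nx)_\U$, and with $\|A'-a\|,\|B'-b\|\le\varepsilon$, where $a,b$ embed diagonally. The key feature is that $y$ is an ``infinitely large multiple'' of $x$: in the compact dual group $\widehat{G^\U}$ the character $\chi\mapsto\chi(y)$ behaves like $z^{N}$ with $N$ infinite.

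The topological step is as follows. Consider the set $K\subseteq\widehat{G^\U}$ of characters whose restriction to the diagonal copy of $\langle x\rangle$ lies in a closed arc $L$ from $I$ to $J$, and pick a connected component $K_0$ of $K$ on which $\chi(y)$ still winds. On $K_0$, $\widehat{A'}\widehat{B'}=\tfrac{\eta}{2}\chi(y)$ never vanishes, so $\widehat{A'}$ and $\widehat{B'}$ never vanish. Over the part lying above $I$ we have $|\widehat{A'}-\hat a|\le\varepsilon<m/2$, so $\widehat{A'}$ has a continuous logarithm there inherited from $\hat a$. Symmetrically, $\widehat{B'}$ has one above $J$. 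I would then try to glue these into a continuous logarithm of $\chi\mapsto\chi(y)$ on a connected set on which it provably has none. That contradiction would prove the theorem.

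The main obstacle is this last step. In the transition region, both $|\hat a|$ and $|\hat b|$ can be small, and sup-norm closeness alone does not control winding. This is exactly why a single large $n$ is insufficient and the ultrapower is needed. There I would try to use that $K_0$ contains a ``solenoid-like'' set on which $\chi(y)$ is not homotopic to a function factoring through the $\langle x\rangle$-coordinate, while $\widehat{A'}$ and $\widehat{B'}$ are, up to homotopy, determined by $\hat a$ and $\hat b$, which depend only on that coordinate.
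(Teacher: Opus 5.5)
Your reduction (the factorizations $a_n'b_n'=\frac{\eta}{2}\delta_{nx}$, the passage to $G^{\U}$, and $A'B'=\frac{\eta}{2}\delta_y$ with $\|A'-a\|_1,\|B'-b\|_1\le\varepsilon$) is the same as the paper's. However, the proof stops exactly at the step that carries the content, and the mechanism you propose for it does not work.

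A logarithm of $\widehat{A'}$ over $I$ and a logarithm of $\widehat{B'}$ over $J$ cannot be glued into a logarithm of $\frac{2}{\eta}\widehat{A'}\widehat{B'}=\hat y$. Over $I$ you only know that $\widehat{B'}$ is uniformly close to $\hat b=0$, so it is homotopically unconstrained there; symmetrically for $\widehat{A'}$ over $J$. Nor is $\widehat{A'}$ ``up to homotopy determined by $\hat a$'', since $\hat a$ vanishes on $J$.

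The missing idea is that no control at all is needed in the transition region. Because $\widehat{A'}\widehat{B'}=\frac{\eta}{2}\hat y$ vanishes nowhere on the \emph{whole} dual group, each factor vanishes nowhere. Hence the winding of $\widehat{A'}$ in the ``$y$-direction'' is a homotopy invariant that does not depend on the $x$-coordinate. This winding is $0$ over a point $\zeta\in I$, where $\widehat{A'}$ takes values in the disc of radius $\varepsilon$ about $\hat a(\zeta)$ and $|\hat a(\zeta)|\ge m>\varepsilon$. It is therefore $0$ over a point of $J$ as well, where $\widehat{B'}$ also has winding $0$ for the same reason. So the product has $y$-winding $0$ over that point, whereas $\frac{\eta}{2}\hat y$ does not.

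You also need a setting in which ``winding in the $y$-direction'' is a concrete integer, and $\widehat{G^{\U}}$ itself is a poor one. A loop $\gamma$ in it induces a homomorphism $h\mapsto\deg(\hat h\circ\gamma)$ from $G^{\U}$ to $\Z$. For $G=\Z$ every homomorphism $\Z^{\U}\to\Z$ is zero: composing with $\Z^{\N}\to\Z^{\U}$ gives a homomorphism $\Z^{\N}\to\Z$ vanishing on finitely supported sequences, which is zero by Specker's theorem. So no loop detects the winding of $\hat y$, and your ``component on which $\chi(y)$ still winds'' would have to be handled with \v{C}ech cohomology of solenoid-like sets, which you have not supplied.

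The paper sidesteps this as follows:
\begin{enumerate}
\item It truncates $A',B'$ to finitely supported $U_0,V_0$, with $U_0V_0$ still close enough to $\frac{\eta}{2}\delta_y$ that its Fourier transform cannot vanish.
\item It passes to the finitely generated subgroup $\Lambda\le G^{\U}$ generated by $x$, $y$ and the supports, and notes that $x,y$ are independent modulo torsion in $\Lambda$.
\item It picks $\Theta\colon\Lambda\to\Z^2$ with $\Theta(x)=(q,0)$, $\Theta(y)=(0,k)$, $qk\neq0$.
\end{enumerate}
This yields Laurent polynomials $P,R$ on $\T^2$, uniformly close to $\hat a(z^q)$ and $\hat b(z^q)$, with $PR$ uniformly close to $\frac{\eta}{2}w^k$. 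Then $P,R$ are nonvanishing on $\T^2$, and the $w$-winding numbers of $P(z,\cdot)$ and $R(z,\cdot)$ are constant in $z$. Evaluating them at points $z$ with $z^q\in I$, respectively $z^q\in J$, forces $0+0=k$, a contradiction since $k\neq0$.
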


In particular, taking $G=\Z$ gives non-openness at every nonzero zero-product pair in the Wiener algebra.

\begin{lem}\label{atomic}
Let $G$ be any group. Fix a non-principal ultrafilter $\mathcal U$ on $\mathbb N$ There is a contractive algebra homomorphism
$$
 Q:\ell^1(G)^{\mathcal U}\rightarrow\ell^1(G^{\mathcal U}),\qquad Q([x_n]_{n\to\U})([k_n]_{n\to\U})=\lim_{\mathcal U}x_n(k_n).
$$
\end{lem}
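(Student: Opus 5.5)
The plan is to verify three things in turn: that $Q$ is well defined with values in $\ell^1(G^{\mathcal U})$, that it is contractive and linear, and that it is multiplicative on all of $\ell^1(G)^{\mathcal U}$. The one non-obvious ingredient is a reduction of multiplicativity to finitely many "atoms" of the ultrapower, using the fact that sup-norms are attained in $\ell^1(G)$.

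\textbf{Well-definedness and contractivity.} Let $z=[x_n]$ with $\sup_n\|x_n\|_1<\infty$.
\begin{itemize}
\item For $k=[k_n]\in G^{\mathcal U}$, the limit $\lim_{\mathcal U}x_n(k_n)$ exists because $|x_n(k_n)|\le\|x_n\|_1$ is bounded.
\item The limit depends only on the class of $(k_n)$. If $k_n=k_n'$ for $\mathcal U$-almost all $n$, the two limits agree.
\item The limit depends only on the class of $(x_n)$. If $\lim_{\mathcal U}\|x_n-x_n'\|_1=0$, then $|x_n(k_n)-x_n'(k_n)|\le\|x_n-x_n'\|_1\to0$.
\end{itemize}
For the norm bound, take finitely many distinct points $k^1,\dots,k^m\in G^{\mathcal U}$. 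For $\mathcal U$-almost all $n$ the elements $k^1_n,\dots,k^m_n$ are pairwise distinct. Hence
$$\sum_{i=1}^m|Q(z)(k^i)|=\lim_{\mathcal U}\sum_{i=1}^m|x_n(k^i_n)|\le\lim_{\mathcal U}\|x_n\|_1=\|z\|.$$
Taking the supremum over finite sets gives $Q(z)\in\ell^1(G^{\mathcal U})$ with $\|Q(z)\|_1\le\|z\|$. Linearity of $Q$ is clear.

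\textbf{Key observation: sup-norms are attained.} Since $x_n\in\ell^1(G)$, the quantity $\|x_n\|_\infty$ is attained at some $g_n\in G$. Then
$$\lim_{\mathcal U}\|x_n\|_\infty=|Q(z)([g_n])|\le\|Q(z)\|_\infty.$$

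\textbf{Exact multiplicativity on finitely many atoms.} Fix $z=[x_n]$ and $w=[y_n]$, and let $\varepsilon>0$. Since $Q(z)\in\ell^1(G^{\mathcal U})$, there are finitely many distinct points $g^1,\dots,g^m\in G^{\mathcal U}$ with $\sum_{h\notin\{g^i\}}|Q(z)(h)|<\varepsilon$.
\begin{itemize}
\item Let $a_n$ be the restriction of $x_n$ to $\{g^1_n,\dots,g^m_n\}$, and set $r_n=x_n-a_n$.
\item Write $a=[a_n]$ and $r=[r_n]$. Then $Q(a)=Q(z)\mathbf 1_{\{g^i\}}$, and $Q(r)$ is the restriction of $Q(z)$ to the complement, so $\|Q(r)\|_1<\varepsilon$.
\item For $k=[k_n]$, the points $g^i_n$ are eventually distinct, so
$$(a_n*y_n)(k_n)=\sum_{i=1}^m a_n(g^i_n)\,y_n\big((g^i_n)^{-1}k_n\big).$$
Passing to the $\mathcal U$-limit term by term yields $Q(a*w)(k)=(Q(a)*Q(w))(k)$.
\end{itemize}

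\textbf{Controlling the remainder.} This is where a naive argument stalls, because $r_n$ may keep large $\ell^1$-norm while spreading its mass thinly. The key observation handles this. For each $k=[k_n]$,
$$|(r_n*y_n)(k_n)|\le\|r_n\|_\infty\|y_n\|_1 .$$
Taking $\mathcal U$-limits gives
$$|Q(r*w)(k)|\le\|Q(r)\|_\infty\|w\|\le\varepsilon\|w\|.$$
On the other side, $\|Q(r)*Q(w)\|_\infty\le\|Q(r)\|_1\|Q(w)\|_1\le\varepsilon\|w\|$.

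\textbf{Conclusion.} Using $z*w=a*w+r*w$, these estimates show
$$|Q(z*w)(k)-(Q(z)*Q(w))(k)|\le2\varepsilon\|w\|\qquad\text{for every }k\in G^{\mathcal U}.$$
Letting $\varepsilon\to0$ gives $Q(z*w)=Q(z)*Q(w)$ pointwise, hence in $\ell^1(G^{\mathcal U})$. Therefore $Q$ is a contractive algebra homomorphism. The main obstacle was controlling the diffuse remainder $r$, and the attained-sup observation is exactly what resolves it.
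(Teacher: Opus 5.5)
The paper states Lemma~\ref{atomic} without proof, so there is no argument of the author's to compare with. Your proof is correct and complete. Well-definedness and contractivity are handled properly; in particular, distinct points of $G^{\mathcal U}$ have $\mathcal U$-almost surely distinct coordinates. The split of $x_n$ into finitely many atoms $a_n$ plus a remainder $r_n$ gives exact multiplicativity on the atomic part, and this is presumably what the label \emph{atomic} alludes to.

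The decisive step is the bound
\[
\lim_{\mathcal U}\|r_n\|_\infty\le\|Q(r)\|_\infty\le\|Q(r)\|_1<\varepsilon .
\]
It is justified correctly, and it disposes of mass that spreads out and is invisible to $Q$, which is exactly why $Q$ is only contractive. Exact attainment of the supremum is not needed: choosing $h_n$ with $|r_n(h_n)|\ge\|r_n\|_\infty-1/n$ works equally well.

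If you want the lemma in the form the paper uses it in the proof of Theorem~\ref{main2}, add one observation. $Q$ sends a constant sequence $[f]$ to $f$ on the diagonal copy of $G$ in $G^{\mathcal U}$, and to $0$ off it, because $f\in\ell^1(G)$ vanishes at infinity. This is what gives $\|U-f\|_1\le r$ there.
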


\begin{proof}[Proof of Theorem~\ref{main2}]
Let $f,g\in A(\T)\setminus\{0\}$ satisfy $fg=0$, and choose
$$
 0<r<\|f\|_\infty,\qquad 0<s<\|g\|_\infty,
$$
viewing $f,g$ in $\ell^1(G)$ through the embedding above. If the product $B(f,r)B(g,s)$ contained a neighborhood of zero, some fixed $c>0$ would allow factorizations
$$
 F_nG_n=c\delta_{nx},\qquad \norm{F_n-f}_1<r,\qquad \norm{G_n-g}_1<s,
$$
valid for all $n\geq 1$. Fix a nonprincipal ultrafilter $\mathcal U$. Put
$$
 U=Q([F_n]_{n\to\U}),\qquad V=Q([G_n]_{n\to\U}),\qquad \nu=[nx]_{n\to \U}\in G^{\mathcal U}.
$$
Then
\begin{equation*}
 UV=c\delta_\nu,\qquad \norm{U-f}_1\le r,\qquad \norm{V-g}_1\le s.
\end{equation*}
Choose $\eta>0$ small enough such that
$$
 r+\eta<\| f\|_\infty,\qquad s+\eta<\| g\|_\infty,
 \qquad \eta(\|U\|_1+\|V\|_1)<|c|/2.
$$
Take finite coefficient truncations $U_0,V_0$ of $U,V$ with errors less than $\eta$. In particular,
\begin{equation}\label{truncation}
 \norm{U_0-f}_1<r+\eta,\qquad
 \norm{V_0-g}_1<s+\eta,\qquad
 \norm{U_0V_0-c\delta_\nu}_1<|c|/2.
\end{equation}
For the last estimate, we used
$$
 U_0V_0-UV=(U_0-U)V_0+U(V_0-V)\qquad \text{and} \qquad\norm{V_0}_1\le\norm{V}_1.
$$

Let $K\le H$ be generated by standard $x$, $\nu$, and the finite supports of $U_0,V_0$. The images of $x,\nu$ in $(K/\operatorname{Tor}K)\otimes_{\Z}\mathbb Q$ are linearly independent. Indeed, a relation $a\nu+bx\in\operatorname{Tor}K$ would give $k(an+b)x=0$ on a set in $\mathcal U$ for some $k\ge1$. Since $x$ has infinite order, this forces $a=b=0$.

As $K$ is finitely generated abelian, rational linear algebra and clearing denominators gives a homomorphism
$$
 \Theta:K\longrightarrow\Z^2,\qquad
 \Theta(x)=(q,0),\quad\Theta(\nu)=(0,m),\qquad qm\ne0.
$$
The induced contractive algebra homomorphism, followed by Fourier transformation, sends $U_0,V_0$ to Laurent polynomials $P,R$ on $\T^2$. The estimates \eqref{truncation} become
\begin{align}
 \sup_{z,w\in\T}|P(z,w)-f(z^q)|&<r+\eta, \\
 \sup_{z,w\in\T}|R(z,w)-g(z^q)|&<s+\eta, \\
 \sup_{z,w\in\T}|P(z,w)R(z,w)-cw^m|&<c/2.\label{productclose}
\end{align}
In particular, we see that 
$$
|P(z,w)Q(z,w)|\geq |cw^n|-|P(z,w)R(z,w)-cw^m|\geq c-c/2>0,
$$
which implies that $P$ and $R$ are nowhere zero on $\T^2$.

Since $q\ne0$, choose $z_f,z_g\in\T$ with $|f(z_f^q)|>r+\eta$ and $|g(z_g^q)|>s+\eta$. The loops $P(z_f,\cdot)$ and $R(z_g,\cdot)$ lie in disks excluding zero, so both have winding number zero. Nonvanishing and connectedness make each winding number independent of $z$. Thus $PR$ has winding number zero in $w$, whereas \eqref{productclose} makes it homotopic through nonvanishing loops to $cw^m$, whose winding number is $m\ne0$. 
\end{proof}

\begin{ex}
    We record a concrete example of a non-zero pair $f,g\in A(\T)$ such that $fg=0$. Indeed, take
$$
 f(e^{it})=\max\{\cos t,0\},\qquad g(e^{it})=\max\{-\cos t,0\}.
$$
They belong to $A(\T)$, as follows from the absolutely convergent series
$$
 |\cos t|=\frac2\pi+\frac4\pi\sum_{n=1}^\infty\frac{(-1)^{n-1}\cos(2nt)}{4n^2-1},\qquad f=\frac{|\cos t|+\cos t}{2},\quad g=\frac{|\cos t|-\cos t}{2}.
$$
In particular, the product of the open balls of radius $1/2$ about these two functions contains zero but no neighborhood of zero.

Equivalently, an explicit pair $a,b\in\ell^1(\Z)$ is given by
$$
 a_0=b_0=\frac1\pi,\qquad a_{\pm1}=\frac14,\qquad b_{\pm1}=-\frac14,
$$
$$
 a_{\pm2n}=b_{\pm2n}=\frac{(-1)^{n-1}}{\pi(4n^2-1)},
$$
with all other coefficients zero. Then $ab=0$ and
$$
 B_{\ell^1(\Z)}(a,1/2)B_{\ell^1(\Z)}(b,1/2)
$$
is not a neighborhood of zero. 
\end{ex}

\subsection{The case of non-abelian groups}

Recall that a unital ring is called \emph{directly finite} if $uv=1$ implies $vu=1$.

\begin{lem}\label{reversible}
Let $A$ be a directly finite unital Banach algebra. If multiplication is open at a pair $(a,b)$ with $ab=0$, then $ba=0$.
\end{lem}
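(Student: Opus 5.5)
We argue by contradiction and use openness to factor a small perturbation of zero that the invertible elements of $A$ force into a left/right inverse pair. Suppose multiplication is open at $(a,b)$ with $ab=0$. Fix radii $r,s>0$ (to be chosen). Openness at $(a,b)$ means $B(a,r)B(b,s)$ is a neighbourhood of $ab=0$, so there is $c>0$ such that every element of norm less than $c$ factors as $a'b'$ with $\|a'-a\|<r$ and $\|b'-b\|<s$. Apply this to $\varepsilon 1$ with $0<\varepsilon\|1\|<c$: we obtain $a',b'$ with
$$a'b'=\varepsilon 1,\qquad \|a'-a\|<r,\qquad \|b'-b\|<s.$$
Then $u=\varepsilon^{-1}a'$ and $v=b'$ satisfy $uv=1$. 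By direct finiteness $vu=1$, i.e.\ $b'a'=\varepsilon 1$.

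Now estimate $ba$ through $b'a'$:
$$ba=b'a'-(b'-b)a'-b(a'-a).$$
This gives
$$\|ba\|\le \varepsilon\|1\|+s(\|a\|+r)+\|b\|r.$$
The key point is that openness at a single pair gives such factorizations for \emph{every} choice of $r,s>0$ (with $c=c(r,s)$ depending on them, and $\varepsilon$ then chosen below $c/\|1\|$). So for arbitrary $\delta>0$ pick $r,s$ with $s(\|a\|+r)+\|b\|r<\delta/2$, then $\varepsilon$ with $\varepsilon\|1\|<\delta/2$. This yields $\|ba\|<\delta$, hence $ba=0$.

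The main thing to check is the interpretation of ``open at $(a,b)$'': it must mean that images of all neighbourhoods of $(a,b)$ are neighbourhoods of $ab$, not just of one fixed neighbourhood. Under that standard definition the argument above is complete. If the paper instead uses a quantitative single-radius version, one would compare with the estimate above and note that only small radii are needed. No other obstacle is expected, since direct finiteness is used only once, to turn $a'b'=\varepsilon1$ into $b'a'=\varepsilon1$.
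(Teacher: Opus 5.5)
Your proof is correct and follows essentially the same route as the paper: use openness at $(a,b)$ to factor a small nonzero multiple of $1$ with factors close to $a$ and $b$, then invert the order via direct finiteness. The paper lets the radii and scalars tend to zero along sequences and passes to the limit in $b_ja_j=c_j1$, whereas you make that limit explicit through the estimate $\|ba\|\le\varepsilon\|1\|+s(\|a\|+r)+\|b\|r$.
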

\begin{proof}
For every $i\in \N$, the image of $U_i=B_A(a,1/i)\times B_A(b,1/i)$ is an open neighborhood of zero. Hence, we can choose sequences $a_j\to a$, $b_j\to b$, and nonzero scalars $c_j\to0$ such that $a_jb_j=c_j1$. Direct finiteness applied to $a_j$ and $c_j^{-1}b_j$ gives $b_ja_j=c_j1$. Passing to the limit yields $ba=0$.
\end{proof}

\begin{cor}
    Let $n\geq 2$. Then $M_n(\CC)$ does not have an open multiplication map.
\end{cor}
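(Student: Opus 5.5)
The plan is to apply Lemma~\ref{reversible} to $A=M_n(\CC)$, which is a unital Banach algebra under any algebra norm, say the operator norm. First I will check that $M_n(\CC)$ is directly finite. If $uv=1$ for $u,v\in M_n(\CC)$, then $\det(u)\det(v)=1$, so $u$ is invertible and $v=u^{-1}$. It follows that $vu=1$. Equivalently, $v$ is injective as a linear map on $\CC^n$, hence bijective because $\CC^n$ is finite-dimensional.

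Next I will produce a pair $(a,b)$ with $ab=0$ but $ba\neq0$. Using matrix units, set $a=E_{11}$ and $b=E_{21}$, which is possible since $n\ge2$. Then $ab=E_{11}E_{21}=0$, because the column index $1$ of $E_{11}$ does not match the row index $2$ of $E_{21}$. On the other hand, $ba=E_{21}E_{11}=E_{21}\neq0$. By Lemma~\ref{reversible}, multiplication is not open at $(a,b)$. Concretely, the image of some product of balls around $a$ and $b$ fails to be a neighborhood of $ab=0$. Hence the multiplication map $M_n(\CC)\times M_n(\CC)\to M_n(\CC)$ is not open.

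None of these steps is a real obstacle; the argument is a direct application of the lemma. The only point to watch is that openness does not depend on the choice of norm. This holds because all norms on the finite-dimensional space $M_n(\CC)$ are equivalent, so the topology, and with it the notion of an open map, is the same for every norm.
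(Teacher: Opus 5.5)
Your proof is correct and is essentially the paper's argument: you apply Lemma~\ref{reversible} using the direct finiteness of $M_n(\CC)$ and the pair $a=E_{11}$, $b=E_{21}$, which are exactly the $2\times 2$ matrices the paper embeds into $M_n(\CC)$. Your determinant check of direct finiteness and your remark that the topology does not depend on the choice of norm just fill in details that the paper leaves implicit.
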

\begin{proof}
    The algebra $M_n(\CC)$ is directly finite. On the other hand, it contains a copy of $M_2(\CC)$, and the matrices 
    $$
a=\begin{pmatrix}1&0\\0&0\end{pmatrix},
\qquad
b=\begin{pmatrix}0&0\\1&0\end{pmatrix}.
$$
satisfy $ab=0$ but $ba\not=0$.
\end{proof}

We now return to convolution algebras. It is a very classical observation that the algebra $\ell^1(G)$ is directly finite: its faithful left regular representation embeds it into the group von Neumann algebra $\operatorname{VN}(G)$, which is directly finite; see \cite[pag. 122]{Ka72}. An analogous statement is true about the matrix amplifications $M_n(\ell^1(G))$. We observe that the previous corollary immediately yields the non-openness of the multiplication map of $M_n(\ell^1(G))$. This is in sharp contrast to the case of the usual convolution algebras.

We now proceed to prove the remaining part of Theorem~\ref{main}.

\begin{thm}\label{nonabelian}
Let $G$ be a non-abelian group. Then, convolution on $\ell^1(G)$ is not an open map. 
\end{thm}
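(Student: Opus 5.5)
The plan is to apply Lemma~\ref{reversible} to $A=\ell^1(G)$, which is unital with unit $\delta_e$ and directly finite by the classical observation recalled above. It then suffices to exhibit $a,b\in\ell^1(G)$ with $ab=0$ but $ba\neq 0$: the lemma shows multiplication cannot be open at $(a,b)$, so convolution is not an open map.

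To build the pair, I would use non-commutativity to pick $g,h\in G$ with $gh\neq hg$, and work inside the group algebra via idempotents and differences of group elements. The cleanest device is the following. If some $g\in G$ has finite order $n$, set $p=\frac1n\sum_{k=0}^{n-1}\delta_{g^k}$, an idempotent with $p\delta_g=p$, so $p(\delta_e-\delta_g)=0$. Conjugating, I would take
\[
a=p,\qquad b=(\delta_e-\delta_g)\delta_h .
\]
Then $ab=p(\delta_e-\delta_g)\delta_h=0$, while $ba=(\delta_e-\delta_g)\delta_h p$. This is nonzero unless $\delta_h p=\delta_g\delta_h p$, that is, unless the coset set $h\langle g\rangle$ is invariant under left multiplication by $g$, i.e.\ $h^{-1}gh\in\langle g\rangle$. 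So I need some $g$ of finite order whose cyclic subgroup is not normal.

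If every element of $G$ has infinite order, I would instead invoke Theorem~\ref{main2}, already proved for abelian groups containing an element of infinite order. That is not directly applicable to non-abelian $G$, but the same strategy can be run inside $\ell^1(\langle x\rangle)\cong A(\mathbb T)$. Take $f,g\in A(\mathbb T)$ with $fg=0$ (as in the Example) and set $a=f$, $b=g\,\delta_h$ with $h$ not normalizing $\langle x\rangle$. Then $ab=0$, and $ba=g\,\delta_h f\,\delta_{h^{-1}}\delta_h$ involves the conjugate $\delta_h f\delta_{h^{-1}}$, supported on $h\langle x\rangle h^{-1}$. If $h\langle x\rangle h^{-1}\cap\langle x\rangle$ is trivial or small, the product $g\cdot(hfh^{-1})$ is a convolution over a "free-like" product of supports and should be nonzero, e.g.\ by looking at a single extreme coefficient.

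The main obstacle is the remaining case: every cyclic subgroup is normal, so $G$ is a Dedekind/Hamiltonian-type situation in which conjugation acts on each $\langle g\rangle$ by an automorphism. There I would use a general element $y=\delta_e+\lambda\delta_{g}$ type construction, or work with $p$ and $q=\frac12(\delta_e+\delta_{t})$ for an element $t$ of order $2$ acting by inversion. For the quaternion group $Q_8$, which lies inside any such torsion Hamiltonian group, I would use a minimal idempotent of the $M_2(\CC)$ block of $\CC Q_8\subseteq\ell^1(G)$ to reproduce the matrix units $a,b$ of the corollary on $M_n(\CC)$. The general principle I would aim for is that any finite non-abelian subgroup $F$ gives $\CC F\supseteq M_k(\CC)$ with $k\ge2$, which yields matrix-unit pairs directly. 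Non-abelian groups with no finite non-abelian subgroups then need the infinite-order argument above, and making that argument rigorous is where I expect the work to lie.
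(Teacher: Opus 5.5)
Your reduction to Lemma~\ref{reversible} matches the paper. The parts you actually carry out are correct: for $g$ of finite order $n$ with $h^{-1}gh\notin\langle g\rangle$ the pair $p$, $(\delta_e-\delta_g)\delta_h$ works, and a finite non-abelian subgroup gives matrix units by Wedderburn.

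The gap is the remaining case, which you leave unproved, and the heuristic you propose for it does not work. There is no ``extreme coefficient'' to look at. Nonzero $f,g\in A(\T)$ with $fg=0$ vanish on open arcs, so their spectra are unbounded in both directions: an element of $A(\T)$ whose spectrum is bounded on one side is, up to a power of $z$ or $\bar z$, the boundary value of a function holomorphic in the disc and continuous up to the boundary, and such a function cannot vanish on an arc unless it is zero.

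More seriously, $ba\neq 0$ is false for a general zero-product pair, even when $h$ does not normalize $\langle x\rangle$. Put $R=\{(m,n)\in\Z^2: x^mhx^n=h\}$. Then:
\begin{enumerate}
\item the map $(m,n)\mapsto x^mhx^n$ identifies $\Z^2/R$ with the double coset $\langle x\rangle h\langle x\rangle$;
\item $ba=g\delta_h f=\sum_{m,n}\widehat g(m)\widehat f(n)\delta_{x^mhx^n}$;
\item Fourier uniqueness on $\ell^1(\Z^2/R)$ shows that $ba=0$ if and only if $g(z)f(w)=0$ for all $(z,w)\in R^\perp\subseteq\T^2$.
\end{enumerate}
Now take $BS(1,2)=\langle x,t\mid txt^{-1}=x^2\rangle$ with $h=t$. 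Here $R=\Z(2,-1)$ and $R^\perp=\{(z,z^2):z\in\T\}$. If $g$ is supported near $-1$ and $f$ near $i$, then $fg=0$ and yet $ba=0$, although $t\langle x\rangle t^{-1}=\langle x^2\rangle\neq\langle x\rangle$. So the zero-product pair must be chosen according to how $h$ interacts with $\langle x\rangle$, and your proposal has no mechanism for doing this.

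That mechanism is the missing idea, and it is exactly the paper's argument. Since $xh\neq hx$ means $(1,-1)\notin R$, some character of $\Z^2/R$ is nontrivial at $(1,-1)$. Equivalently, there is $(z,w)\in R^\perp$ with $z\neq w$. Take smooth $p,q$ on $\T$ with disjoint supports and $q(z)=p(w)=1$. Set $a=\sum_n\widehat p(n)\delta_{x^n}$ and $c=(\sum_m\widehat q(m)\delta_{x^m})\delta_h$. Then $ac=0$, and the norm-one functional with $\Phi(\delta_{x^mhx^n})=z^mw^n$ (and zero off the double coset) gives $\Phi(ca)=q(z)p(w)=1$.

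This works for an arbitrary non-commuting pair $x,h$, whether $x$ has finite or infinite order, because $\delta_n\mapsto\delta_{x^n}$ is always a contractive homomorphism $\ell^1(\Z)\to\ell^1(G)$. It therefore makes your whole case analysis unnecessary. That analysis would in any case need reorganizing: the dichotomy ``some torsion cyclic subgroup is non-normal'' versus ``every element has infinite order'' is not exhaustive (e.g.\ $\Z/2\times F_2$ falls in neither branch). In the residual case you would also need the classification of groups all of whose subgroups are normal, just to produce an infinite-order $x$ with $\langle x\rangle$ non-normal.
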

\begin{proof}
By Lemma~\ref{reversible}, it suffices to find $a,b\in\ell^1(G)$ with $ab=0$ and $ba\ne0$.

Choose $x,h\in G$ with $xh\ne hx$ and put
$$
 R=\{(m,n)\in\Z^2:x^m h x^n=h\}.
$$
This is a subgroup of $\Z^2$, and $(1,-1)\notin R$. Characters of the finitely generated abelian group $\Z^2/R$ separate points. Consequently, there are $z,w\in\T$ such that $z^m w^n=1$ for all $(m,n)\in R$ and $z\ne w$. Moreover,
$$
 x^m h x^n=x^{m'}h x^{n'}\quad\Longleftrightarrow\quad (m-m',n-n')\in R.
$$
Hence, the formula
$$
\Phi(\delta_y)=
\begin{cases}
z^m w^n, & y=x^m h x^n \text{ for some } m,n\in\mathbb Z,\\
0, & y\notin \langle x\rangle h\langle x\rangle.
\end{cases}
$$
defines a well-defined bounded linear functional on $\ell^1(G)$ of norm one.

Now we choose smooth functions $p,q$ on $\T$ with disjoint supports and $p(w)=q(z)=1$. By smoothness, they have absolutely summable Fourier coefficients, and the expressions
$$
 a=\sum_{n\in\Z}\widehat p(n)\delta_{x^n},\qquad b=\sum_{m\in\Z}\widehat q(m)\delta_{x^m}
$$
converge in $\ell^1(G)$. Set $c=b\delta_h$. Since $pq=0$, we have $ab=0=ac$. On the other hand, absolute convergence gives
$$
 \Phi(ca)=\sum_{m,n}\widehat q(m)\widehat p(n)z^m w^n=q(z)p(w)=1,
$$
so $ca\ne0$. Lemma~\ref{reversible} now shows that multiplication is not open at $(a,c)$.
\end{proof}

\section*{AI statement}

The main result was obtained through iterated conversations with GPT-6 Astra. The author then proceeded to simplify and polish the arguments and carefully write this manuscript.

\section*{Acknowledgments}

The author gratefully acknowledges support from the Simons Foundation Dissertation Fellowship SFI-MPS-SDF-00015100. He also thanks Ben Hayes for his support and guidance.

\printbibliography

\bigskip
\bigskip
\newpage
ADDRESS

\smallskip
\smallskip
Felipe I. Flores

Department of Mathematics, University of Virginia,

114 Kerchof Hall. 141 Cabell Dr,

Charlottesville, Virginia, United States

E-mail: hmy3tf@virginia.edu
\end{document}